\newcommand\numberthis{\addtocounter{equation}{1}\tag{\theequation}}
\theoremstyle{plain}
\newtheorem{thm}{Theorem}
\newtheorem{theorem}[thm]{Theorem}
\newtheorem{lemma}[thm]{Lemma}
\newtheorem{conjecture}[thm]{Conjecture}
\theoremstyle{definition}
\newtheorem{definition}[thm]{Definition}
\newtheorem{remark}[thm]{Remark}
\newtheorem{problem}[thm]{Problem}
\newtheorem{thevarthm}[thm]{\varthmname}
\newenvironment{varthm*}[1]{\trivlist\item[]{\bf #1.}\it}{\endtrivlist}
\newcommand\eps{\varepsilon}
\newcommand\be{\begin{eqnarray*}}
\newcommand\ee{\end{eqnarray*}}
\newcommand\set[1]{\left\{#1\right\}}
\newcommand\with{\,\,\vrule\,\,}
\newcommand\binomial[2]{{#1\choose#2}}
\newcommand\C{\mathbb C}
\newcommand\call{\mathcal L}
\newcommand\cali{\mathcal I}
\newcommand\calo{\mathcal O}
\newcommand\Z{\mathbb Z}
\renewcommand\P{\mathbb P}
\renewcommand\O{\mathcal O}
\newcommand\newop[2]{\def#1{\mathop{\rm #2}\nolimits}}
\newop\edim{edim}
\newop\Zeroes{Zeroes}
\newop\Jac{Jac}
\newcommand\rationalto{\dashrightarrow}
\newcommand\eqnref[1]{(\ref{#1})}
\newcommand\lnd{\call_N(d;m_1,\ldots,m_s)}
\newcommand\ltd{\call_3(d;m_1,\ldots,m_s)}
\newcommand\lndz{\call_N(d;Z,m_1,\ldots,m_s)}
\newcommand\keywords[1]{{\renewcommand\thefootnote{}\footnotetext{\textit{Keywords:} #1.}}}
\newcommand\subclass[1]{{\renewcommand\thefootnote{}\footnotetext{\textit{Mathematics Subject Classification (2010):} #1.}}}
\begin{document}

\author{Thomas~Bauer, Grzegorz~Malara, Tomasz~Szemberg, Justyna~Szpond}
\title{Quartic unexpected curves and surfaces}
\date{\today}
\maketitle
\thispagestyle{empty}

\begin{abstract}
   Our research is motivated by recent work of
   Cook II, Harbourne, Migliore, and Nagel
   on configurations of points in
   the projective plane with properties that are unexpected from
   the point of view of the postulation theory. In this note, we
   revisit the basic configuration of nine points
   appearing in
   work of
   Di Gennaro/Ilardi/Vall\`es
   and Harbourne,
   and we exhibit some additional new properties of this configuration.
   We then pass to projective three-space $\P^3$ and exhibit
   a surface with unexpected postulation properties there.
   Such higher dimensional phenomena have not been observed so far.
\end{abstract}

\keywords{fat points, linear systems, postulation problem, SHGH conjecture}
\subclass{MSC 14C20 \and MSC 14J26 \and MSC 14N20 \and MSC 13A15 \and MSC 13F20}


\section{Introduction}
   Let $P_1,\dots,P_s$ be a set of $s\geq 1$ generic points in $\P^N(\C)$ and
   let $m_1,\ldots,m_s$ be positive integers. It is a classical problem
   in algebraic geometry to study the linear systems $\lnd$ of hypersurfaces of degree $d$
   passing through each of the points $P_i$ with multiplicity at least $m_i$ for $i=1,\ldots,s$.
   This linear system is viewed as the projectivization of the vector space of
   homogeneous polynomials of degree $d$ vanishing at points $P_1,\ldots,P_s$
   to order $m_1,\ldots,m_s$ respectively. Determining its projective dimension is
   one of the fundamental questions in the area.

\begin{problem}
   Determine $\dim\lnd$.
\end{problem}

   The \emph{expected dimension} of $\lnd$ is the number given by the naive conditions count
   \begin{equation}\label{eq:edim}
      \edim\lnd=\max\left\{-1,\binom{N+d}{N}-\sum_{i=1}^s\binom{N+m_i-1}{N}-1\right\}
      \,.
   \end{equation}
   We have always
   \begin{equation}\label{eq:dim edim}
      \dim\lnd\geq \edim\lnd
      \,.
   \end{equation}
   If equality holds in \eqref{eq:dim edim}, then we say that the system $\lnd$
   is \emph{non-special}. Otherwise it is called \emph{special}.

\begin{remark}\label{rem:nonspecial}
   It is well known that a single point with arbitrary multiplicity (that is, $s=1$ and $m_1$
   arbitrary) is non-special for any $d$ and $N$. Similarly, generic points with multiplicity $1$
   (that is $m_1=\ldots=m_s=1$) impose always independent conditions on forms of arbitrary
   degree in projective spaces of arbitrary dimension.
\end{remark}

   Special linear systems with multiplicities $m_1=\ldots=m_s=2$ have been completely
   classified by Alexander and Hirschowitz \cite{AleHir95}.
   The Segre-Harbourne-Gimigliano-Hirschowitz Conjecture governs the speciality
   of planar systems with points of arbitrary multiplicity, see \cite{Cil00} for
   a very nice survey. In $\P^3$ the special linear systems $\ltd$ are the subject
   of a conjecture due to Laface and Ugaglia \cite{LafUga06}. In higher dimensions
   there are some partial results due to Alexander and Hirschowitz \cite{AleHir00}
   and scattered partial conjectures due to various authors, see e.g.~\cite{EmsIar95}.
   The complete picture remains however rather obscure.

   In the groundbreaking article \cite{CHMN}, Cook II, Harbourne, Migliore and Nagel
   opened a new path of research. They propose to study systems
   \begin{equation}\label{eq:type}
      \lndz
      \,,
   \end{equation}
   where $Z$ is a finite set of points (with multiplicity $1$)
   and $P_1,\ldots,P_s$ are generic fat points, i.e., $m_1,\ldots,m_s\geq 2$.
   Thus the classical problem outlined above is the case $Z=\emptyset$.
   In \cite{CHMN} the authors focus on the case $N=2$ and $s=1$. They show that,
   somewhat unexpectedly in the view of Remark \ref{rem:nonspecial},
   there exist special linear systems in this situation. They relate
   the existence of such systems to properties of line arrangements
   determined by lines dual to points in $Z$. This leads to a very nice geometric
   explanation of the existence of special curves.

   In the present note we exhibit a new phenomenon:
   The existence of special linear systems of type \eqref{eq:type} in a higher
   dimensional projective space, namely in $\P^3$. In the subsequent paper
   \cite{Szp18}, the last named author will show how our example can be generalized
   to higher dimensional projective spaces.

   Our main result is Theorem \ref{thm:unexpected quartic}.
   Conjecture \ref{conj:syz} proposes a geometric explanation for the existence
   of the special surfaces in Theorem \ref{thm:unexpected quartic}. Our research has been accompanied
   by Singular \cite{DGPS} experiments.


\section{Plane quartics with nine base points and a general triple point}\label{sec:B3}

   We begin with a Coxeter arrangement of lines classically denoted by B3 (or $A(9,1)$ in Gr\"unbaum's notation).
   This arrangement is given by the linear factors of the polynomial
   \be
      f=xyz(x+y)(x-y)(x+z)(x-z)(y+z)(y-z)
      \,.
   \ee
   It is depicted in Figure \ref{fig:9-lines}, with the convention that the line at infinity
   $z=0$ is indicated by the circle.
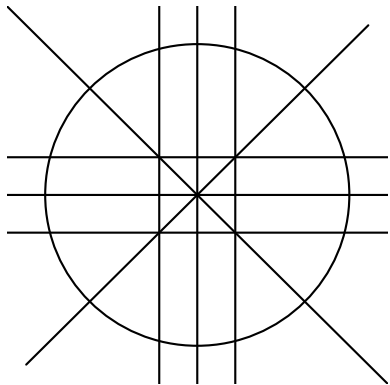
\begin{figure}
\begin{center}
\definecolor{qqttzz}{rgb}{0,0,0}
\definecolor{fqrutc}{rgb}{0,0,0}
\definecolor{qqwuqq}{rgb}{0,0,0}
\begin{tikzpicture}[line cap=round,line join=round,>=triangle 45,x=1.0cm,y=1.0cm,scale=0.5]
\clip(-5,-5) rectangle (5,5);
\draw [line width=.8pt,color=qqwuqq] (0.,-5.54) -- (0.,5.8);
\draw [line width=.8pt,color=qqwuqq,domain=-7.82:13.18] plot(\x,{(1.-0.*\x)/1.});
\draw [line width=.8pt,color=qqwuqq,domain=-7.82:13.18] plot(\x,{(-0.-0.*\x)/1.});
\draw [line width=.8pt,color=qqwuqq,domain=-7.82:13.18] plot(\x,{(-1.-0.*\x)/1.});
\draw [line width=.8pt,color=qqwuqq] (-1.,-5.54) -- (-1.,5.8);
\draw [line width=.8pt,color=qqwuqq] (1.,-5.54) -- (1.,5.8);
\draw [line width=.8pt,color=qqwuqq,domain=-7.82:13.18] plot(\x,{(-0.-1.*\x)/1.});
\draw [line width=.8pt,color=qqwuqq,domain=-4.5:4.5] plot(\x,{(-0.-1.*\x)/-1.}); 
\draw [line width=.8pt,color=fqrutc] (0.,0.) circle (4.cm);
\end{tikzpicture}
\end{center}
\caption{The $B3$ arrangement of lines.}
\label{fig:9-lines}
\end{figure}
   We use the following convention for
   the duality between lines and points: A point $(A:B:C)$
   corresponds to the line $Ax+By+Cz=0$ and vice versa.
   Thus the linear factors of $f$ correspond to the points
   $$\begin{array}{lll}
   P_1=(1:0:0),& P_2=(0:1:0),& P_3=(0:0:1),\\
   P_4=(1:1:0),& P_5=(1:-1:0),& P_6=(1:0:1),\\
   P_7=(1:0:-1),& P_8=(0:1:1),& P_9=(0:1:-1).
   \end{array}$$
   It was observed in \cite{CHMN} that these points
   impose independent conditions on quartics in $\P^2$.

\begin{remark}
   There is some ambiguity in the choice of coordinates of the points $P_1,\ldots,P_9$.
   Harbourne introduced in \cite[Example 4.1.10]{Harbourne:asymptotics} some coordinates
   which due to their lack of symmetry are not as convenient to work with as those used
   by Dimca in \cite[Example 3.6]{Dim16}. Therefore we prefer to work with Dimca's coordinates.
\end{remark}

   Let $V\subset H^0(\P^2,\O_{\P^2}(4))$ be the linear subspace
   of all quartics vanishing at $P_1,\dots,P_9$.
   As we have $\dim V=6$ by \cite{CHMN},
   one expects that the system becomes empty when
   $6$ further conditions are imposed.
   Unexpectedly, however, there exists
   for any choice of an additional point $R=(a:b:c)$
   a quartic $Q_R\in V$ with a triple point at~$R$.
   This quartic can be written down explicitly as
   \begin{align*}
      Q_R(x:y:z) ={} & 3a(b^2-c^2)\cdot x^2yz +3b(c^2-a^2)\cdot xy^2z +3c(a^2-b^2)\cdot xyz^2\\
                &+a^3\cdot y^3z -a^3\cdot yz^3 +b^3\cdot xz^3
                -b^3\cdot x^3z +c^3\cdot x^3y -c^3\cdot xy^3. \numberthis\label{eq:quartic}
   \end{align*}
   It is elementary to check that $Q_R$ indeed vanishes
   at the points $P_1,\dots,P_9$ and vanishes at $R$ to order 3.

   The equation \eqref{eq:quartic} can be viewed also
   as a cubic equation in the variables $a,b,c$ with parameter $S=(x:y:z)$.
   Taking this dual point of view, it becomes
   \begin{align*}
      Q_S(a:b:c)={}&yz(y^2-z^2)\cdot a^3 +xz(z^2-x^2)\cdot b^3 +xy(x^2-y^2)\cdot c^3 +3x^2yz\cdot ab^2  \\
                &-3xy^2z\cdot a^2b +3xyz^2\cdot a^2c -3x^2yz\cdot ac^2 +3xy^2z\cdot bc^2- 3xyz^2\cdot b^2c
                \,.
                \numberthis\label{eq:quartic-dual}
   \end{align*}
   (Of course the equations \eqnref{eq:quartic} and \eqnref{eq:quartic-dual}
   are the same, just treated from a different perspective.)
   Surprisingly, each of the cubic curves \eqnref{eq:quartic-dual}
   has a triple point at $S=(x:y:z)$.
   All the cubics therefore split in a product of three lines --
   except for those with
   parameter values $(x:y:z)$ that correspond to the points $P_1,\dots,P_9$,
   in which cases the right hand side of equation \eqref{eq:quartic-dual} vanishes identically.
   Currently we do not have a theoretical explanation for this property.

   The family of cubics $Q_{S}$ parameterized by
   $S=(x:y:z)\in\P^2$ has no additional base points. This can be easily
   verified for specific and sufficiently general values
   of $(x:y:z)$.
\begin{remark}
   Recently, Farnik, Galuppi, Sodomaco and Trok have announced an interesting result
   to the effect that the quartic curve discussed in this section is, up to projective equivalence,
   the only unexpected curve of this degree, see \cite{FGST}.
\end{remark}

\section{Quartic surfaces with 31 base points and a general triple point}\label{sec:Fermat}

   Let $F$ be the Fermat-type ideal in $\C[x,y,z,w]$ generated by
   \be
      x^3-y^3,\
      y^3-z^3,\
      z^3-w^3.
   \ee
   Its zero locus $Z$ consists of the $27$ points
   \be
      P_{(\alpha,\beta,\gamma)}=(1:\eps^\alpha:\eps^\beta:\eps^\gamma)
   \ee
   where $\eps$ is a primitive root of unity of order 3 and
   $1\le\alpha,\beta,\gamma\le 3$.
   Let $I$ be the ideal of the union $W$ of $Z$ with the $4$ coordinate points,
   that is,
   $$I=F\cap (x,y,z)\cap (x,y,w)\cap (x,z,w)\cap (y,z,w).$$

\begin{lemma}\label{lem:generators of I}
   The ideal $I$ is generated by the following $8$ binomials of degree $4$:
   $$x(y^3-z^3),\; x(z^3-w^3),\; y(x^3-z^3),\; y(z^3-w^3)\,,$$
   $$z(x^3-y^3),\; z(y^3-w^3),\; w(x^3-y^3),\; w(y^3-z^3)\,.$$
\end{lemma}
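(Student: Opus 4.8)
The plan is to prove the two inclusions separately. Writing $J$ for the ideal generated by the eight binomials and $g_1=x^3-y^3$, $g_2=y^3-z^3$, $g_3=z^3-w^3$, the inclusion $J\subseteq I$ is the routine one. Each generator has the shape $v\cdot(a^3-b^3)$ with $v,a,b$ three distinct variables; the factor $a^3-b^3$ is a combination of $g_1,g_2,g_3$, so each generator lies in $F$, and since the monomials $va^3,vb^3$ each involve two distinct variables, each generator vanishes at all four coordinate points, i.e.\ lies in $(x,y,z)\cap(x,y,w)\cap(x,z,w)\cap(y,z,w)$. Hence $J\subseteq I$.

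For the reverse inclusion I would compare Hilbert functions: since $J\subseteq I$, it suffices to prove $\dim J_d=\dim I_d$ in every degree $d$. First I would record that $F=(g_1,g_2,g_3)$ is a complete intersection, because its zero locus $Z$ is finite; thus $g_1,g_2,g_3$ is a regular sequence and its only syzygies are the Koszul ones, in degree $6$. The intersection of the four coordinate-point ideals is generated by the six squarefree quadratic monomials, whose degree-$d$ part is exactly the space of forms with vanishing coefficients on $x^d,y^d,z^d,w^d$. Therefore $I=F\cap(\text{that ideal})$, and $I_d=\ker\phi_d$, where $\phi_d\colon F_d\to\C^4$ reads off the coefficients of the four pure powers.

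The heart of the argument is the computation of the quotient $F/J$. Presenting $F$ through $\bar g_1,\bar g_2,\bar g_3$, the eight generators of $J$ become the relations $z\bar g_1=w\bar g_1=0$, $x\bar g_2=w\bar g_2=0$, $x\bar g_3=y\bar g_3=0$, together with $y\bar g_1=-y\bar g_2$ and $z\bar g_2=-z\bar g_3$. I would first check that the three Koszul syzygies are consequences of these; for instance the syzygy $(g_2,-g_1,0)$ gives $y^3\bar g_1=-y^3\bar g_2$, which already follows from $y\bar g_1=-y\bar g_2$. Consequently $F/J$ is presented by the eight relations alone. Combining the vanishing and gluing relations then forces $x^ay^b\bar g_1=0$ whenever $a,b\ge1$ (use $y\bar g_1=-y\bar g_2$ and $x\bar g_2=0$), and likewise $y^bz^c\bar g_2=0$ and $z^cw^e\bar g_3=0$ for the mixed exponents. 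Hence each $\bar g_i$ survives only along two coordinate axes, and the two gluings identify the shared ones. In every degree $d\ge4$ this leaves exactly the four classes $x^{d-3}\bar g_1$, $y^{d-3}\bar g_1$, $z^{d-3}\bar g_2$, $w^{d-3}\bar g_3$ (with $y^{d-3}\bar g_1=-y^{d-3}\bar g_2$ and $z^{d-3}\bar g_2=-z^{d-3}\bar g_3$), so $\dim(F/J)_d=4$ for $d\ge4$, while $\dim(F/J)_3=3$ and it vanishes below.

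Finally I would show $\phi_d$ is surjective for $d\ge4$ by exhibiting four preimages, e.g.\ $x^{d-3}g_1$, $y^{d-3}g_2$, $z^{d-3}g_3$, $w^{d-3}g_3$, whose pure-power images are linearly independent; this gives $\dim(F/I)_d=4=\dim(F/J)_d$ for $d\ge4$, and the two also agree in the remaining degrees. Since the natural surjection $F/J\twoheadrightarrow F/I$ is then an isomorphism in every degree, $I=J$. The main obstacle is the bookkeeping in the third step: one must extract all the collapsing relations, in particular the vanishing of the mixed monomials $x^ay^b\bar g_1$ and its analogues, and confirm that the Koszul syzygies contribute nothing new, since missing either would produce the wrong dimension and break the comparison.
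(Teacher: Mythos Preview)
Your argument is correct, but it follows a genuinely different route from the paper's own proof. The paper argues the reverse inclusion directly: given $f\in I$, it writes $f=g_y(x^3-y^3)+g_z(x^3-z^3)+g_w(x^3-w^3)$ and then, working modulo $J$, uses ad hoc reductions (e.g.\ $z(x^3-y^3),w(x^3-y^3)\in J$ force $g_y\in\C[x,y]$; then $xy(x^3-y^3)\in J$ forces $g_y=a_yx^d+b_yy^d$) together with evaluations at the four coordinate points to drive all remaining coefficients to zero. It is an elementary term-by-term reduction with no module theory or Hilbert-function comparison.

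Your approach is more structural: you present $F/J$ as a quotient of $R^3$ by the eight linear relations (after absorbing the Koszul syzygies), bound $\dim(F/J)_d\le 4$ by the spanning argument, and identify $F/I$ with the kernel of the pure-power map $\phi_d$ to get $\dim(F/I)_d=4$; the surjection $F/J\twoheadrightarrow F/I$ then forces equality. This has the advantage of being systematic and visibly generalisable (e.g.\ to other exponents or more variables, as in the sequel \cite{Szp18}), and it makes the ``four-dimensional obstruction'' $F/I$ transparent. The paper's proof, by contrast, is shorter and entirely self-contained, needing nothing beyond explicit polynomial identities and point evaluations. One small presentational point: your sentence ``this leaves \emph{exactly} the four classes \ldots\ so $\dim(F/J)_d=4$'' overstates what the spanning argument alone gives; at that stage you have only $\le 4$, and the equality is clinched by the comparison with $F/I$ in your final paragraph. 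The logic is sound once the loop closes, but you should phrase it as an upper bound until then.
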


\begin{proof}
   Let $J$ be the ideal generated by the $8$ binomials. We show first
   the containment $J\subset I$. Since $I$ is a radical ideal
   by definition, it is enough to check that $\Zeroes(I)\subset\Zeroes(J)$.
   To this end it is enough to verify that every binomial generating $J$ vanishes along $W$.
   This is obvious because the vanishing in the coordinate points is guaranteed
   by the fact that every binomial involves $3$ different variables and
   vanishing along $Z$ is provided by the cubic term in brackets.

   For the reverse containment let $f$ be an element of $I$. In particular, $f$
   vanishes at all points of $Z$, so we may write $f$ in the following way
   \begin{equation}\label{eq:f}
   f=g_y\cdot(x^3-y^3)+g_z\cdot(x^3-z^3)+g_w\cdot(x^3-w^3)
   \,,
   \end{equation}
   with homogeneous polynomials $g_y,g_z,g_w$.
   From now on we work modulo $J$. We want to show that $f=0$.
   Since $z(x^3-y^3), w(x^3-y^3)\in J$, we may assume that $g_y$ depends only on $x$ and $y$.
   By the same token, we may assume that $g_z$ depends only on $x$ and $z$, and $g_w$ respectively
   on $x$ and $w$.

   We have
   $$xy(x^3-y^3)=-y\cdot x(y^3-z^3)+x\cdot y(x^3-z^3)\,,$$
   so that $xy(x^3-y^3)\in J$. Thus $g_y=a_yx^d+b_yy^d$ for some $a_y,b_y\in \C$ and $d\geq 0$.
   Similarly $g_z=a_zx^d+b_zz^d$ and $g_w=a_wx^d+b_ww^d$.

   Evaluating \eqref{eq:f} at $(0:1:0:0)$ we obtain
   $$0=f(0:1:0:0)=g_y(0:1:0:0)=b_y.$$
   Similarly, $b_z=b_w=0$.
   Thus
   \begin{equation}\label{eq:f z a}
      f=x^d\left(a_y(x^3-y^3)+a_z(x^3-z^3)+a_w(x^3-w^3) \right)
      \,.
   \end{equation}
   If $d=0$, then evaluating again in the coordinate points $(0:1:0:0)$, $(0:0:1:0)$ and $(0:0:0:1)$
   we obtain $a_y=a_z=a_w=0$ and we are done.

   If $d>0$, then evaluating at $(1:0:0:0)$ we get from \eqref{eq:f z a}
   \begin{equation}\label{eq:as}
      a_y+a_z+a_w=0\,.
   \end{equation}
   Since $xy^3=xw^3$ and $xz^3=xw^3$ modulo $J$, we get from \eqref{eq:f z a} and \eqref{eq:as}
   $$
      f=x^{d-1}\left(a_yx(x^3-w^3)+a_zx(x^3-w^3)+a_wx(x^3-w^3) \right)=0
   $$
   and we are done.
\end{proof}

   Let $V\subset H^0(\P^3,\O_{\P^3}(4))$ be the linear space
   of quartics vanishing along $W$. It follows from Lemma \ref{lem:generators of I}
   that $\dim(V)=8$. As vanishing to order $3$ at a point in $\P^3$
   imposes $10$ conditions on forms of arbitrary degree, we do not expect
   that for a general (hence for any) point $R=(a:b:c:d)$ there exists
   a quartic $Q_R\in V$
   vanishing to order three at $R$. However this is the case,
   as we now show:

\begin{theorem}\label{thm:unexpected quartic}
   The system
   $\call_3(4;W,3)$ is special, i.e.,
   for any point
   $R=(a:b:c:d)$ in $\P^3\setminus W$ there
   exists a quartic $Q_R$ vanishing to order $3$ at $R$ and vanishing
   at all points from the set $W$. Moreover,
   the quartic $Q_R$ has 4 additional singularities at
   \be
      R_1 = (-2a:b:c:d)\,,\
      R_2 = (a:-2b:c:d)\,, \\
      R_3 = (a:b:-2c:d)\,,\
      R_4 = (a:b:c:-2d)\,.
   \ee
   These singularities are double points.
\end{theorem}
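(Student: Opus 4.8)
The plan is to reduce the statement to the existence of a single explicit quartic and then verify every assertion by direct differentiation. By Lemma \ref{lem:generators of I}, the space $V$ of quartics vanishing along $W$ is spanned by the eight binomials listed there, so a general member is $Q=\sum_{i=1}^{8}\lambda_i B_i$ and, crucially, membership in $V$ is automatic: no condition whatsoever is imposed by $W$. Requiring in addition that $Q$ have a triple point at $R=(a:b:c:d)$ amounts to the vanishing of the ten partial derivatives of order at most $2$ of $Q$ at $R$, i.e. to ten linear equations in the eight unknowns $\lambda_i$. Collecting these gives a $10\times 8$ coefficient matrix $M(R)$ whose entries are the values at $R$ of the corresponding partials of the $B_i$; the content of the theorem is precisely that $\operatorname{rank} M(R)\le 7$ for every $R\notin W$, so that a nonzero $Q_R$ exists and the system is special in the sense of \eqref{eq:dim edim}.

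The heart of the proof is to produce a kernel vector of $M(R)$ uniformly in $R$, that is, to write down $Q_R$ explicitly. I would search for it in bihomogeneous form of bidegree $(4,3)$ in the variables $(x,y,z,w)$ and $(a,b,c,d)$, modeled on the planar quartic \eqref{eq:quartic} and constrained by symmetry: the configuration $W$, and hence $V$, is invariant under the simultaneous action of the symmetric group $S_4$ permuting $\{x,y,z,w\}$ together with $\{a,b,c,d\}$, as well as under the diagonal scalings by cube roots of unity. Imposing this equivariance on an ansatz built from the $B_i$ with cubic coefficients in $a,b,c,d$ cuts the free parameters drastically and should pin $Q_R$ down up to scalar; in any case it yields an explicit candidate. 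Once the candidate is in hand, checking that its ten second-order partials vanish at $R$ is an elementary polynomial identity in $a,b,c,d$; organizing the checks by the monomial directions and noting that the ten of them fall into just two $S_4$-orbits, namely $\{x^2,y^2,z^2,w^2\}$ and $\{xy,xz,\dots,zw\}$, reduces the verification to two representative cases.

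It remains to locate the additional singularities. For each $R_i$ I would evaluate the four first-order partials $\partial_x Q_R,\dots,\partial_w Q_R$ at $R_i$ and verify that they vanish; by the Euler relation for a form of degree $4$ the vanishing of the gradient already forces $Q_R(R_i)=0$, so $R_i$ is a singular point of the surface $Q_R=0$. The simultaneous $S_4$-equivariance of the construction means it suffices to treat $R_1=(-2a:b:c:d)$, after which $R_2,R_3,R_4$ follow by permuting coordinates. To see that these singularities are exactly double points rather than of higher multiplicity, I would finally exhibit one second-order partial that is nonzero at $R_i$ (equivalently, check that the Hessian quadratic does not degenerate), which holds for $R\notin W$.

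The main obstacle is the middle step: guessing the correct closed form of $Q_R$, that is, the explicit rank-deficiency witness for $M(R)$. Everything downstream is routine verification of polynomial identities, but there is no a priori reason the ten triple-point conditions should be dependent, and it is exactly this dependence that must be produced. A purely matrix-theoretic alternative --- proving $\operatorname{rank} M(R)\le 7$ by exhibiting, for every $R$, a linear relation among the rows of $M(R)$ --- would avoid writing the polynomial but is essentially equivalent in difficulty, since such a relation is just the coordinate description of the quartic $Q_R$. On this view the scalar $-2$ occurring in the coordinates of the $R_i$ is not accidental: it is the value forced by the cubic (Fermat) part of $Q_R$, and a conceptual account of why it appears is what Conjecture \ref{conj:syz} is intended to provide.
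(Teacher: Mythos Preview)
Your approach is essentially the paper's: write down an explicit quartic $Q_R$ and verify by direct computation that it lies in $V$, has a triple point at $R$, and is singular at the four points $R_i$. The paper simply records the formula \eqref{eq:quartic in P3} and says the checks are elementary; your proposal fills in a plausible search strategy and a symmetry-based organization of the verification.

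There is one concrete slip in your search ansatz. You propose a bihomogeneous form of bidegree $(4,3)$ in $(x,y,z,w)$ and $(a,b,c,d)$, i.e.\ cubic coefficients in $a,b,c,d$, by analogy with the planar case \eqref{eq:quartic}. In fact the coefficients of the paper's $Q_R$ are of degree~$5$ (e.g.\ $b^2(c^3-d^3)$), so the correct bidegree is $(4,5)$; your degree-$3$ ansatz would produce no nonzero solution and you would have to raise the coefficient degree before finding the formula. Relatedly, under the simultaneous $S_4$-action the polynomial $Q_R$ is not invariant but transforms by the sign character, so your equivariance search should be carried out in that isotypic piece. Neither point changes the overall shape of the argument, but without them the ``middle step'' you correctly identify as the main obstacle would not succeed as written.
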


\begin{proof}
   Since we know from Lemma \ref{lem:generators of I} that $V$ is
   of dimension $8$, it is
   enough to prove the existence of a quartic as claimed.
   It can be checked by elementary calculations that the quartic
   \begin{align*}
      Q_R(x:y:z:w)&=b^2(c^3-d^3)\cdot x^3y+a^2(d^3-c^3)\cdot xy^3+c^2(d^3-b^3)\cdot x^3z\\
                  &+c^2(a^3-d^3)\cdot y^3z+a^2(b^3-d^3)\cdot xz^3+b^2(d^3-a^3)\cdot yz^3\\
                  &+d^2(b^3-c^3)\cdot x^3w+d^2(c^3-a^3)\cdot y^3w+d^2(a^3-b^3)\cdot z^3w\\
                  &+a^2(c^3-b^3)\cdot xw^3+b^2(a^3-c^3)\cdot yw^3+c^2(b^3-a^3)\cdot zw^3 \numberthis\label{eq:quartic in P3}
   \end{align*}
   satisfies the assertion.
\end{proof}

   As before, the equation in \eqref{eq:quartic in P3} can be viewed
   as a quintic polynomial $Q_S$ in variables $a,b,s,d$. Let $S=(x:y:z:w)$, then we have
   \begin{align*}
      Q_S(a:b:c:d)&=y(w^3-z^3)\cdot a^3b^2+x(z^3-w^3)\cdot a^2b^3+z(y^3-w^3)\cdot a^3c^2\\
                  &+z(w^3-x^3)\cdot b^3c^2+x(w^3-y^3)\cdot a^2c^3+y(x^3-w^3)\cdot b^2c^3\\
                  &+w(z^3-y^3)\cdot a^3d^2+w(x^3-z^3)\cdot b^3d^2+w(y^3-x^3)\cdot c^3d^2\\
                  &+x(y^3-z^3)\cdot a^2d^3+y(z^3-x^3)\cdot b^2d^3+z(x^3-y^3)\cdot c^2d^3.
   \end{align*}
   It can be checked by elementary computation that $Q_S$ has a triple point at $S$.


\section{Geometry of the unexpected quartic}

   It is well known that a quartic surface $X$ in $\P^3$ with a triple point
   is rational. (This is easily seen by projecting $X$ from the triple point.)
   More importantly, $X$ is the image of $\P^2$ under the rational
   mapping $\varphi$ defined by the linear system of plane quartics vanishing
   along a complete intersection $0$-dimensional subscheme $U$ of length $12$,
   see~\cite{Roh84}. In this section, for $X=Q_R$, we identify the subscheme $U$ and the mappings
   explicitly.

   We begin with the projection.
   Let $\pi:\P^3\dashrightarrow\P^2$ be the projection from $R=(a:b:c:d)$ onto
   the plane $\P^2$ with coordinates $(p:q:r)$, which is defined by
   $$\pi: (x:y:z:w) \mapsto (p:q:r)=(dz-cw:cy-bz:bx-ay).$$
   Let $L_i$ be the line joining $R$ and $R_i$ for
   $i=1,\ldots,4$ (where the $R_i$ are the points from
   Theorem~\ref{thm:unexpected quartic}).
   By Bezout's theorem,
   these lines are contained in $Q_R$. They get contracted under $\pi$ onto
   points $F_i$ respectively, where
   $$F_1=(0:0:1),\; F_2=(0:-c:a),\; F_3=(-d:b:0),\; F_4=(1:0:0).$$
   Taking some generic sections of $Q_R$ and their images in $\P^2$ we determine
   $4$ points
   $$B_1=(dc:-cb:ab),\; B_2=(dc:0:-ab),\; $$
   $$B_3=(a^2b^2(c^3-d^3):a^2d^2(b^3-c^3):c^2d^2(a^3-b^3)),\; B_4=(0:1:0).$$
   which will be additional assigned base points of the linear series
   of quartics that we will consider.
   Let $\Gamma_R$ be the plane cubic given by the equation
   \begin{align*}
   \Gamma_R(p:q:r)&=bc^2d(a^3-b^3)\cdot p^2q+c^2d^2(a^3-b^3)\cdot pq^2\\
                  &+a^2bd(c^3-b^3)\cdot p^2r+2a^2d^2(c^3-b^3)\cdot pqr\\
                  &+a^2b^2(c^3-d^3)\cdot q^2r+acd^2(c^3-b^3)\cdot pr^2+ab^2c(c^3-d^3)\cdot qr^2
   \end{align*}
   and let $\Delta$ be the plane quartic defined by the equation
   \begin{align*}
   \Delta_R(p:q:r)&=a^2bc^2(a^3-b^3)(c^3-d^3)\cdot pq^2r +a^2c^2d(a^3-b^3)(c^3-d^3)\cdot q^3r\\
                  &-ab^2d^2(a^3-c^3)(b^3-c^3)\cdot p^2r^2+b^3cd(a^3-c^3)(c^3-d^3)\cdot qr^3 \\
                  &+ad(c^3-d^3)(a^3b^3+a^3c^3-2b^3c^3)\cdot q^2r^2 -bcd^3(a^3-c^3)(b^3-c^3)\cdot pr^3\\
                  &-ab(b^3c^6-a^3c^6+2a^3b^3d^3-a^3c^3d^3-3b^3c^3d^3+2c^6d^3)\cdot pqr^2.
   \end{align*}
   Let $U_R$ be the scheme-theoretic complete intersection of $\Gamma_R$ and $\Delta_R$. Then $U$
   has length $12$ and it is supported on the $8$ points $B_1,\ldots, B_4$ and $F_1,\ldots,F_4$.
   The points $B_i$ for $i=1,\ldots,4$ are reduced in $U$.
   The points $F_i$ for $i=1,\ldots,4$ support each a structure of length $2$.
   In these points the curves $\Gamma_R$ and $\Delta_R$ are tangent to each other.

   The linear system $|\calo_{\P^2}(4)\otimes\cali_U|$ has (projective) dimension $3$.
   It is spanned by $f_0=p\Gamma_R$, $f_1=q\Gamma_R$, $f_2=r\Gamma_R$, and $f_3=\Delta_R$. In order to recover the original
   coordinates of all points, it is however necessary to define
   $\varphi=(g_0:g_1:g_2:g_3):\P^2\dashrightarrow\P^3$ in the following basis:
   \begin{align*}
      g_0&=abc^2(a^3-b^3)\; f_0+2ac^2d(a^3-b^3)\; f_1
         +d(a^3b^3+2a^3c^3-3b^3c^3)\; f_2-ab^2\; f_3,\\
      g_1&=b^2c^2(a^3-b^3)\; f_0+2bc^2d(a^3-b^3)\; f_1
         +a^2bd(b^3-c^3)\; f_2-b^3\; f_3,\\
      g_2&=bc^3(a^3-b^3)\; f_0-c3d(a^3-b^3)\; f_1
         +a^2cd(b^3-c^3)\; f_2-b^2c\; f_3,\\
      g_3&=-2bc^2d(a^3-b^3)\; f_0-c^2d^2\; f_1+a^2d^2(b^3-c^3)\; f_2-b^2d\; f_3.
   \end{align*}
   Note that the mapping $\varphi$ contracts the cubic curve $\Gamma$ to the
   triple point $R$.

   All claims in this section can be in principle checked by tedious
   hand calculations. In order to allow a more convenient verification, we provide
   a Singular code in \cite{Sin18}.


\section{General geometric considerations}

   Cook et al.\ establish in \cite[Prop.~5.10]{CHMN} a method that allows
   to determine unexpected curves from syzygies.
   We propose here a conjecture that generalizes their idea to the
   surface case.
   To set it up, we need to generalize the notions
   of \emph{multiplicity index} and \emph{speciality index}
   that were introduced in
   \cite{CHMN} for the case of $\P^2$.

\begin{definition}\rm
   Let $Z$ be a reduced
   0-dimensional subscheme of $\P^n$.
   The \emph{multiplicity index} of $Z$ is the number
   \be
      m_Z = \min\set{j\in\Z\with\dim [I_{Z+jP}]_{j+1}>0}
   \ee
   where $P$ is a general point in $\P^n$.

   The \emph{speciality index} $u_Z$ of $Z$ is the least integer $j$
   such that, for a general point $P\in\P^n$, the scheme
   $Z+jP$ imposes independent conditions
   on the system $|\O_{\P^n}(j+1)|$, i.e., the smallest $j$
   such that
   \be
      \dim [I_{Z+jP}]_{j+1} = \binomial{j+1+n}{n} - \binomial{n-1+j}{n} - |Z|
      \,.
   \ee
\end{definition}

\medskip\noindent
   Consider now a reduced scheme $Z\subset\P^3$ of $d$ points $P_i$.
   For each point $P_i$ let $\ell_i\in K[x,y,z,w]$ be a linear form
   defining the plane dual to $P_i$, and set $f=\ell_1\cdot\ldots\cdot\ell_d$.
   Further, let $\ell$ be a linear form
   defining a general plane in $\P^3$.

\begin{conjecture}\label{conj:syz}
   Assume that the characteristic of $K$ does not divide $|Z|$
   and that
   with $m_Z\le u_Z$.
   Let
   \be
      s_0 f_x + s_1 f_y + s_2 f_z + s_3 f_w + s_4 \ell & = & 0 \\
      s'_0 f_x + s'_1 f_y + s'_2 f_z + s'_3 f_w + s'_4 \ell & = & 0
   \ee
   be linearly independent syzygies
   of least degree
   of the ideal
   $\Jac(f)+(\ell)=(f_x,f_y,f_z,f_w,\ell)$, and consider the rational maps
   $\P^3\rationalto\P^3$ given as
   \be
      \sigma = (s_0:s_1:s_2:s_3)
      \quad\mbox{and}\quad
      \sigma' = (s'_0:s'_1:s'_2:s'_3)
      \,.
   \ee
   Further,
   consider the rational map
   \be
      \Phi: \P^3 & \rationalto & (\P^3)^* \\
      Q          & \mapsto     & \mbox{the plane through $Q$, $\sigma(Q)$, $\sigma'(Q)$}
   \ee
   Then the image of the restriction of $\Phi$ to $\ell$
   is an unexpected surface for $Z$.
\end{conjecture}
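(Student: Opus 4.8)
The plan is to transplant the syzygy argument of Cook, Harbourne, Migliore and Nagel for plane curves \cite[Prop.~5.10]{CHMN} into $\P^3$, the organizing principle being that a syzygy of $\Jac(f)+(\ell)$ is the same datum as a logarithmic derivation of the plane arrangement cut out by $f$. First I would read the relation $s_0 f_x+s_1 f_y+s_2 f_z+s_3 f_w+s_4\ell=0$ as saying that the derivation $\theta=s_0\partial_x+s_1\partial_y+s_2\partial_z+s_3\partial_w$ satisfies $\theta(f)\in(\ell)$, i.e.\ $\theta(f)$ vanishes on the general plane $H=\{\ell=0\}$. Since $f=\ell_1\cdots\ell_d$ is the product of the planes dual to $Z$, derivations with $\theta(f)=0$ are the logarithmic derivations of that arrangement, and imposing only $\theta(f)\in(\ell)$ amounts to passing to the restriction to $H$. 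Thus the syzygy module in least degree is governed by the generic splitting type of the logarithmic derivation sheaf $\mathcal D_0(f)$ along a general line in $H$, and the degree in which the first syzygies appear should match, through the standard dictionary, the multiplicity index $m_Z$. Reconstructing this dictionary in $\P^3$ (in particular pinning down the exact twist) is the first thing to verify.

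The technical heart, and the step I expect to be hardest, is to show that under the stated hypotheses there are \emph{two} linearly independent syzygies of this least degree, so that $\sigma$ and $\sigma'$ are genuinely distinct and $\Phi$ is defined; this is the feature that distinguishes the surface case from the plane case, where a single syzygy (a line through $Q$ and $\sigma(Q)$) suffices. Here the hypothesis $\mathrm{char}\,K\nmid|Z|$ is used to keep the Euler derivation under control, since it is the Euler relation that would otherwise corrupt the syzygy count, while the inequality $m_Z\le u_Z$ is the lifted existence criterion: it must be shown to force the least-degree graded piece of the syzygy module to have dimension at least $2$. I would make this precise by computing the Hilbert function of the syzygy module from that of $Z+jP$ and reading off, from the jump controlled by $m_Z$ and $u_Z$, that two independent least-degree syzygies exist and are not proportional. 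Obtaining a characteristic-free handle on this splitting type is exactly where the difficulty of the conjecture is concentrated.

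With $\sigma,\sigma'$ in hand I would analyse the geometry of $\Phi|_\ell$. First one checks that for general $Q\in H$ the points $Q,\sigma(Q),\sigma'(Q)$ are not collinear, so that $\Phi$ really lands in $(\P^3)^*$ and its image is two-dimensional rather than a curve; then one computes the degree of the image from the degrees of $\sigma$ and $\sigma'$, carefully tracking the base locus where the three points degenerate. Interpreting the resulting surface in $(\P^3)^*$ through duality, it should pass through the configuration attached to $Z$ and carry a point of multiplicity $m_Z$ at the general point, exactly mirroring the explicit quartic $Q_R$ of Theorem~\ref{thm:unexpected quartic}; I would use that example as a numerical consistency check on the degree-and-multiplicity bookkeeping (degree $4$, multiplicity $3$).

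Finally, to conclude that the image is genuinely \emph{unexpected} and not merely nonempty, I would invoke the speciality index: for the degree $j+1$ realised by the surface with $j<u_Z$, the scheme $Z+jP$ fails by definition to impose independent conditions on $|\O_{\P^3}(j+1)|$, so the naive count predicts no such surface, while Steps~1--3 produce one. Combining existence with this speciality yields the assertion. Throughout, the two delicate points to monitor are that the least-degree syzygies live in precisely the degree dictated by $m_Z$, and that $\Phi$ stays non-degenerate, since a collapse of $\sigma,\sigma'$ to a single map, or of the image to a curve, would break the argument.
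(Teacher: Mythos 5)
There is no proof of this statement in the paper to compare against: the statement is posed as Conjecture~\ref{conj:syz}, and the authors offer only a supporting remark establishing two partial facts --- that every point of $\ell\cap\ell_i$ is mapped by $\Phi$ to $P_i$, and that points of $\ell\cap\sigma(\ell)\cap\sigma'(\ell)$ map to the general point $P$ --- together with an explicit list of what remains open: (a) that the image of $\Phi$ is a surface, (b) that $\Phi$ is defined away from finitely many points of $\ell$ (equivalently that $Q$, $\sigma(Q)$, $\sigma'(Q)$ do not become collinear along $\ell$), and (c) that the multiplicity of the image at $P$ is high enough. Your proposal is a reasonable research programme in the spirit of \cite[Prop.~5.10]{CHMN}, and your two heuristic observations in Step 3 essentially reproduce the paper's supporting remark; but it is not a proof, and the gaps in it are precisely the paper's open items (a)--(c) plus one more that you yourself flag without resolving.

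Concretely: your Step 2 --- producing \emph{two} linearly independent least-degree syzygies and tying their degree to $m_Z$ --- is asserted via a ``standard dictionary'' between syzygies of $\Jac(f)+(\ell)$ and the splitting type of the logarithmic derivation sheaf, but that dictionary is only standard in $\P^2$, where $\mathcal{D}_0(f)$ is a rank-$2$ vector bundle and restriction to a general line splits by Grothendieck; for a plane arrangement in $\P^3$ the analogous sheaf has rank $3$, need not be locally free (the arrangement need not be free), and no splitting-type machinery is available to force two independent syzygies in the degree dictated by $m_Z$, nor their pointwise independence along $\ell$ that your Step 3 needs for non-collinearity. Likewise your final step conflates existence of \emph{a} singular point at $P$ with the required multiplicity: unexpectedness demands multiplicity exactly matching the failing conditions count (in the model example, a triple point on a quartic), and nothing in your argument computes the multiplicity of $\Phi(\ell)$ at $P$ --- this is the paper's open item (c). So the honest assessment is that you have correctly identified where the difficulty lives, but the statement remains a conjecture both in the paper and after your proposal.
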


\begin{remark}\rm
   The conjecture above is supported by the following
   considerations:
   \begin{itemize}
   \item[(1)]
      \emph{For each $i$,
      all points on the line $\ell\cap\ell_i$
      are mapped to the point $P_i$
      (i.e., the line goes through $Z$, as desired).}

      \medskip
      \emph{Proof.}
      Let $Q$ be a point on $\ell\cap\ell_i$.
      One shows first
      that $\sigma(Q)\in\ell_i$
      (this works as in \cite[Prop.~5.10]{CHMN}).
      But we also have
      $\sigma'(Q)\in\ell_i$ by the same argument.
      Thus, the three points
      $Q$, $\sigma(Q)$, $\sigma'(Q)$ all lie on $\ell_i$.
      By definition of $\Phi$, this implies
      $\Phi(Q)=\ell_i$.
   \item[(2)]
      \emph{The points in $\ell\cap\sigma(\ell)\cap\sigma'(\ell)$
      map to the general point $P$.}

      \medskip
      \emph{Proof.}
      Let $Q$ be a point in $\ell\cap\sigma(\ell)\cap\sigma'(\ell)$.
      As above, we have then $\sigma(Q)\in\ell$ and $\sigma'(Q)\in\ell$.
      By definition of $\Phi$ it follows that
      $\Phi(Q)=\ell$.
   \end{itemize}
   To prove the conjecture, one would need
   to show:
   \begin{itemize}
   \item[(a)]
      The image of $\Phi$ is a surface.
   \item[(b)]
      $\Phi$ is undefined only in certain points
      of the lines $\ell\cap\ell_i$.
      (This would follow from the following condition:
      For every point $Q$ on $\ell$, the points
      $\sigma(Q)$ and $\sigma'(Q)$ are not collinear.
      In other words, the syzygy vectors
      $(s_0,s_1,s_2)$ and $(s'_0,s'_2,s'_2$) are linearly
      independent in all points of $\ell$.)
   \item[(c)]
      The multiplicity of the point $P$ in (2) is high enough.
   \end{itemize}

\end{remark}


\paragraph*{Acknowledgement.}
   This research has been initiated while the three last authors visited the University
   of Marburg. It is a pleasure to thank the Department of Mathematics in Marburg for hospitality and
   Istv\'an Heckenberger and Volkmar Welker for helpful conversations.
   Malara was partially supported by National Science Centre, Poland, grant 2016/21/N/ST1/01491.
   Szemberg and Szpond were partially supported by National Science Centre, Poland, grant
   2014/15/B/ST1/02197.



\bigskip
\bigskip
\small

   Thomas Bauer,
   Fach\-be\-reich Ma\-the\-ma\-tik und In\-for\-ma\-tik,
   Philipps-Uni\-ver\-si\-t\"at Mar\-burg,
   Hans-Meer\-wein-Stra{\ss}e,
   D-35032~Mar\-burg, Germany

\nopagebreak
   \textit{E-mail address:} \texttt{tbauer@mathematik.uni-marburg.de}

\bigskip
   Grzegorz Malara,
   Department of Mathematics, Pedagogical University of Cracow,
   Podchor\c a\.zych 2,
   PL-30-084 Krak\'ow, Poland.

\nopagebreak
   \textit{E-mail address:} \texttt{grzegorzmalara@gmail.com}

\bigskip
   Tomasz Szemberg,
   Department of Mathematics, Pedagogical University of Cracow,
   Podchor\c a\.zych 2,
   PL-30-084 Krak\'ow, Poland

\nopagebreak
   \textit{E-mail address:} \texttt{tomasz.szemberg@gmail.com}

\bigskip
   Justyna Szpond,
   Department of Mathematics, Pedagogical University of Cracow,
   Podchor\c a\.zych 2,
   PL-30-084 Krak\'ow, Poland.

\nopagebreak
   \textit{E-mail address:} \texttt{szpond@up.krakow.pl}


\end{document}